\documentclass[pdflatex,sn-mathphys-num]{sn-jnl}

\usepackage{graphicx}%
\usepackage{multirow}%
\usepackage{amsmath,amssymb,amsfonts}%
\usepackage{amsthm}%
\usepackage{mathrsfs}%
\usepackage[title]{appendix}%
\usepackage{xcolor}%
\usepackage{textcomp}%
\usepackage{manyfoot}%
\usepackage{booktabs}%
\usepackage{algorithm}%
\usepackage{algorithmicx}%
\usepackage{algpseudocode}%
\usepackage{listings}%
\usepackage{subcaption} 

\theoremstyle{thmstyleone}%
\newtheorem{lemma}{Lemma}

\theoremstyle{thmstyletwo}%
\newtheorem{remark}{Remark}%

\theoremstyle{thmstylethree}%

\begin{document}

\title[Supercritical Mass Fokker--Planck]{Supercritical Mass and Condensation in Fokker--Planck Equations for Consensus Formation}

\author[1]{\fnm{Monica} \sur{Caloi}}\email{monica.caloi@unimore.it}

\author*[2]{\fnm{Mattia} \sur{Zanella}}\email{mattia.zanella@unipv.it}

\affil[1]{\orgdiv{Department of Physics, Informatics and Mathematics}, \orgname{University of Modena and Reggio Emilia}, \orgaddress{\street{Via Campi 213/b}, \city{Modena}, \postcode{41125},  \country{Italy}}}

\affil[2]{\orgdiv{Department of Mathematics "F. Casorati"}, \orgname{University of Pavia}, \orgaddress{\street{Via A. Ferrata, 5}, \city{Pavia}, \postcode{27100}, \country{Italy}}}


\abstract{Inspired by recently developed Fokker--Planck models for Bose--Einstein statistics, we study a consensus formation model with condensation effects driven by a polynomial diffusion coefficient vanishing at the domain boundaries. For the underlying kinetic model, given by a nonlinear Fokker--Planck equation with superlinear drift, it was shown that if the initial mass exceeds a critical threshold, the solution may exhibit finite-time concentration in certain parameter regimes. Here, we show that this supercritical mass phenomenon persists for a broader class of diffusion functions and provide estimates of the critical mass required to induce finite-time loss of regularity.}

\keywords{multi-agent systems, Bose-Einstein condensate, Fokker-Planck equations, consensus formation}
\pacs[MSC Classification]{35Q84, 35Q91, 35Q93, 82B40, 93D50}

\maketitle

\section{Introduction}\label{intro}

In recent decades, several models have been introduced to describe the emergence of consensus in large interacting particle systems \cite{DG,HK}. Consensus-type phenomena typically result from the interplay between random effects, modelled by diffusion operators, and compromise mechanisms, described through drift-type terms. For large interacting systems, kinetic and mean-field formulations can be derived, enabling the analysis of the time evolution of a density function representing the proportion of agents or particles concentrated around an emerging collective state \cite{APZ,BHW,D_etal,DW,MT,PTTZ,T06}.
These modelling frameworks have found significant applications in the analysis of opinion formation processes characterising multiagent systems and their societal impact, see, e.g., \cite{ACDZ,DFWZ,DWo,Z}. Furthermore, they have played a central role in the development of global optimisation methodologies and in the rigorous investigation of the collective alignment, see, e.g., \cite{CCTT,MR4793478,LTZ,FPZ} and the references therein.

In this work, we focus on a new class of opinion formation models that are capable to describe strong concentration phenomena in the form of condensation, see \cite{CDTZ}. To this end, similarly to the Kaniadakis-Quarati equation for quantum indistinguishable particles relaxing towards the Bose-Einstein statistics \cite{BAGT,CHW,KQ,Hopf,T12}, we study the effects of  superlinear drifts on the emerging distribution. We also mention \cite{CDFT} for the connected drift equation in which the concentrated mass is strictly increasing after the blow-up time. Hence, we extend the argument provided in \cite{TZ} for the blow-up of the solution in finite time to a general polynomial diffusion coefficient. Loss of regularity is triggered for a sufficient initial mass in the supercritical parameters' regime. 

In more detail, the paper is structured as follows: in Section \ref{superlinear} we present the general Fokker--Planck framework, including linear and superlinear drift cases, and discuss the structure of stationary states for different diffusion weights. In Section \ref{loss}, we analyse the propagation of $L^2$-regularity for the time-dependent problem and establish conditions for finite-time loss of regularity in the supercritical mass regime. Finally, we summarise our findings and outline open questions and possible extensions.
 
\section{Superlinear Fokker-Planck Equations}\label{superlinear}
The classical set-up of problems for consensus formation relies on the evolution of a distribution $f = f(w,t)$, $w \in I= [-1,1]$, $t\ge0$,  solution to the Fokker-Planck-type equation 
\begin{equation}
\label{eq:generalFP}
\begin{split}
&\dfrac{\partial}{\partial t}f(w,t) = \dfrac{\partial}{\partial w} \left[ (w-m)J(f) + \sigma^2 \dfrac{\partial}{\partial w}(H(w) f(w,t))\right],  \\
&f(w,0) = f_0(w) \in L^1(I)
\end{split}
\end{equation}
complemented with no-flux boundary conditions and with $\sigma^2>0$. In \eqref{eq:generalFP} the weight $H(w)$ is such that $H(\pm 1) = 0$. As in \cite{T06}   a classic choice for this quantity is provided by 
\begin{equation}
\label{eq:HBeta}
H(w) = 1-w^2.  
\end{equation}
In the linear case, corresponding to $J(f) = f$, it is easily verified that the equilibrium distribution corresponds, under the assumption \eqref{eq:HBeta}, to a Beta-type distribution 
\begin{equation}
\label{eq:beta_equi}
f^\infty(w) = \dfrac{\mu}{2^{a+b+1}\textrm{B}(a+1,b+1)} (1+w)^{a}(1-w)^{b},
\end{equation}
where $\mu>0$ is the initial conserved mass and
\begin{equation}\label{eq:ab}
a = -1 + \dfrac{1+m}{2\sigma^2}, \qquad b = -1 +\dfrac{1-m}{2\sigma^2}. 
\end{equation}
being $\textrm{B}(\cdot,\cdot)$ the Beta function.\\

As discussed in \cite{T06}, the solution to \eqref{eq:generalFP} conserves mass and momentum. For the problem of the convergence of the solution to the linear consensus model towards the equilibrium density \eqref{eq:beta_equi} we mention \cite{FPTT}. For data-oriented applications, the propagation of $L^2$ regularity has important consequences in connection with uncertainty quantification \cite{BCMZ,CPZ,DPZ}. In the present setting, the following result holds
\begin{lemma}
Let $f(w,t)$, $w \in I$, $t\ge0$, be the solution to the linear Fokker-Planck equation for consensus formation \eqref{eq:generalFP} with $J(f) = f$  and nonconstant diffusion weight \eqref{eq:HBeta} complemented with no-flux boundary conditions. We assume that the initial distribution satisfies $f_0(w) \in L^1(I)\cap L^2(I)$. Then, for all $t \in [0,T]$, we have $ f(\cdot,t)\in L^1(I)\cap L^2(I)$.  Moreover, if $m= 0$ and $\sigma^2\le\frac{1}{2}$, for any initial distribution $f_0 \in L^1(I)\cap L^2(I) $ such that  $\int_{I} w f_0(w)dw = 0$, the solution satisfies the uniform-in-time estimate
\[
\| f(\cdot,t)\|_{L^2(I)}^2 \le \max \left\{\| f_0\|_{L^2(I)}^2; \left(\dfrac{C_N\mu^4(1-2\sigma^2)}{2\sigma^2}\right)^{1/2} \right\},\qquad t\ge0,
\] 
where $C_N = \frac{3^3}{2^5}$. 
\end{lemma}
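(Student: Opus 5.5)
The plan is to derive a differential inequality for $\|f(\cdot,t)\|_{L^2(I)}^2$ by the energy method, then close it with a Nash-type inequality adapted to the degenerate weight $H(w)=1-w^2$.

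\emph{Energy identity.} Multiply \eqref{eq:generalFP} (with $J(f)=f$) by $f$ and integrate over $I$. The no-flux condition removes the flux boundary term, and I will assume the remaining boundary terms vanish for smooth solutions. After one integration by parts the right-hand side is $-\int_I \partial_w f\,[(w-m)f+\sigma^2\partial_w(Hf)]\,dw$. Next write $(w-m)f\,\partial_w f=\tfrac12(w-m)\partial_w f^2$ and use $\partial_w(Hf)=H\partial_w f-2wf$. Integrating by parts once more, with $H(\pm1)=0$ killing the boundary contributions of the diffusion part, gives
\[
\frac12\frac{d}{dt}\|f\|_{L^2(I)}^2=\Big(\frac12-\sigma^2\Big)\|f\|_{L^2(I)}^2-\sigma^2\int_I (1-w^2)\,(\partial_w f)^2\,dw .
\]
Dropping the last term and applying Gronwall gives $\|f(\cdot,t)\|^2_{L^2}\le \|f_0\|^2_{L^2}e^{(1-2\sigma^2)t}$, which is finite on $[0,T]$. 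Together with conservation of mass and nonnegativity, this proves the first claim.

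\emph{Uniform bound.} For $m=0$ and $\sigma^2\le\frac12$ the linear term is nonnegative, so I need a lower bound on the weighted Dirichlet form. The target is a weighted Nash inequality
\[
\|f\|_{L^2(I)}^6\le C_N\,\mu^4\int_I (1-w^2)(\partial_w f)^2\,dw ,
\]
valid for densities of mass $\mu$ and zero first moment. The zero first moment is propagated in time because momentum is conserved when $m=0$. Once this inequality holds, setting $X(t)=\|f\|_{L^2}^2$ gives
\[
X'\le (1-2\sigma^2)X-\frac{2\sigma^2}{C_N\mu^4}X^3 .
\]
The right-hand side is negative whenever $X^2>C_N\mu^4(1-2\sigma^2)/(2\sigma^2)$. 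A comparison/invariant-region argument then shows $X(t)\le\max\{X(0),(C_N\mu^4(1-2\sigma^2)/(2\sigma^2))^{1/2}\}$ for all $t\ge0$, which is the stated estimate.

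\emph{Main obstacle: the weighted Nash inequality with $C_N=3^3/2^5$.} The weight $1-w^2$ degenerates at $w=\pm1$, so the standard one-dimensional Nash inequality $\|f\|_2^6\le C\|f\|_1^4\|f'\|_2^2$ does not apply directly. My first attempt is to bound $\|f\|_\infty$ by integrating $\partial_w f$ from a point where $f$ is at most its mean, and then use Cauchy--Schwarz with weights $(1-w^2)^{\pm1/2}$. The integral of $(1-w^2)^{-1}$ diverges at the endpoints, so this cannot be done naively. I would therefore split $I$ into an interior region where $1-w^2$ is bounded below and boundary layers near $\pm1$, and use the zero-mean constraint together with mass conservation to control the mass that can sit near the endpoints. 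Afterwards I would optimise over the splitting parameter and over the interpolation $\|f\|_2^2\le\|f\|_\infty\|f\|_1$. The factor $3^3/2^5$ is the typical output of optimising a cubic expression of the form $\lambda^2(1-\lambda)$, which suggests the constant comes from this optimisation. If the splitting argument fails to close, the fallback is the change of variable $w=\sin\theta$, which transforms the weighted energy into a more standard form on $(-\pi/2,\pi/2)$; the classical Nash argument would then be applied there.
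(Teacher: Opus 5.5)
Your plan follows the paper's proof step by step: the same $L^2$ identity, Gronwall for the first claim, a weighted Nash inequality, and the comparison argument for $X'\le(1-2\sigma^2)X-\frac{2\sigma^2}{C_N\mu^4}X^3$. The only difference is that the paper does not prove the Nash inequality; it quotes it from Lemma A1 of \cite{TZ}.

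The genuine gap is that this inequality, in the form you target, is false. So neither the boundary-layer splitting nor the substitution $w=\sin\theta$ can establish it. Take $f\equiv c>0$ on $I$. It is a density with zero first moment and $\int_I(1-w^2)(\partial_w f)^2\,dw=0$, yet $\|f\|_{L^2(I)}^6=8c^6>0$. No first-moment condition excludes constants. The obstruction is the kernel of the weighted Dirichlet form, not mass sitting near $\pm1$. Even imposing $f(\pm1)=0$ would not help: functions equal to $1$ except for logarithmic cut-offs on $\varepsilon^2<1-|w|<\varepsilon$ have weighted energy $O(1/\log(1/\varepsilon))$. Your own $L^\infty$ step already shows the problem. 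Integrating from a point where $f\le\mu/2$ gives $\|f\|_{L^2(I)}^2\le\mu^2/2+\mu\int_I|\partial_wf|\,dw$. The additive constant $\mu^2/2$ can never be absorbed into a pure product $\mu^4\int_I(1-w^2)(\partial_wf)^2\,dw$.

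This is not fixable by a cleverer argument, because the stated estimate itself fails on part of the range. Suppose $27/70=2C_N/(1+4C_N)\le\sigma^2<1/2$. Then $\bigl(C_N\mu^4(1-2\sigma^2)/(2\sigma^2)\bigr)^{1/2}\le\mu^2/2$, so for $f_0\equiv\mu/2$ the claim reads $\|f(\cdot,t)\|_{L^2(I)}^2\le\mu^2/2$ for all $t$. By Cauchy--Schwarz, $\|f\|_{L^2(I)}^2\ge\mu^2/2$, with equality only for the constant $\mu/2$. So the solution would have to stay constant. But for $f\equiv c$ the right-hand side of the equation equals $c(1-2\sigma^2)\neq0$.

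What the energy method does control is $f-\mu/2$. Under the zero-first-moment condition this function is orthogonal to $1$ and $w$, so the Legendre spectral gap gives $\int_I(1-w^2)(\partial_wf)^2\,dw\ge6\bigl(\|f\|_{L^2(I)}^2-\mu^2/2\bigr)$. Hence $X'\le(1-14\sigma^2)X+6\sigma^2\mu^2$, which yields a uniform bound for $\sigma^2>1/14$. Smaller $\sigma^2$ would need a Nash-type inequality applied to $f-\mu/2$, with a threshold of a different form.

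Finally, in your first step $H(\pm1)=0$ alone does not remove the boundary terms. The second integration by parts leaves $-(1\mp m-2\sigma^2)f(\pm1,t)^2$. This vanishes only for solutions regular up to $w=\pm1$, where the no-flux condition reads $(1\mp m-2\sigma^2)f(\pm1,t)=0$. It is harmless for $m=0$, $\sigma^2\le1/2$, since the terms are then nonpositive. It is not harmless in general: for $\sigma^2>1/2$ your identity would give $\|f(\cdot,t)\|_{L^2(I)}\to0$, contradicting $\|f\|_{L^2(I)}^2\ge\mu^2/2$. Indeed, for $\sigma^2\ge1-|m|$ the Beta steady state is not even in $L^2(I)$. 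So the first claim for arbitrary $(m,\sigma^2)$ is not covered by your argument, nor by the paper's.
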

\begin{proof}
Let $f_0(w) = f(w,0) \in L^1(I) \cap L^2(I)$, $f_0(w)\ge0$, be the initial distribution for the linear consensus formation model \eqref{eq:generalFP} with initial mass $\int_{I}f(w,0)dw = \mu>0$. For any $\sigma^2>0$ and $m \in (-1,1)$ we have
\[
\dfrac{\partial}{\partial t}\| f(\cdot,t)\|_{L^2(I)}^2 = \int_{I}2f\dfrac{\partial}{\partial w}\left[(w-m)f + \sigma^2 \dfrac{\partial}{\partial w}((1-w^2)f) \right] dw, 
\]
from which we get
\begin{equation}
\label{eq:evoL2_lin}
\dfrac{\partial}{\partial t}\| f(\cdot,t)\|_{L^2(I)}^2 = (1-2\sigma^2)\| f\|_{L^2(I)}^2 - 2\sigma^2 \int_{I}(1-w^2)\left(\dfrac{\partial}{\partial w}f\right)^2dw. 
\end{equation}
Hence, for any $m\in (-1,1)$, from the Gronwall's Lemma we get
\[
\| f(\cdot,t)\|^2_{L^2(I)} \le \| f_0\|_{L^2(I)}^2 e^{(1-2\sigma^2)t},
\]
and $f \in L^2(I)$ for any $t \in[0,T]$. 
Finally, we observe that if $\int_{I}wf(w,0)dw =0$ and $m=0$ the solution to \eqref{eq:generalFP} is such that $\int_{I}wf(w,t)dw =0$ for all $t>0$. Hence,
Lemma A1 in \cite{TZ} provides the following Nash-type inequality in the presence of the nonconstant diffusion weight \eqref{eq:HBeta}
\[
\left(\int_{I}|f(w,t)|^2dw\right)^3 \le C_N \int_{I}(1-w^2)\left(\dfrac{\partial}{\partial w}f(w,t) \right)^2dw \left(\int_{I}|f(w,t)|dw\right)^4,
\]
with $C_N = \frac{3^3}{2^5}$. By nonnegativity of $f$ and mass conservation, $\int_I|f(w,t)|\,dw=\mu$ for any $t>0$. Therefore, from \eqref{eq:evoL2_lin} we obtain
\[
\dfrac{\partial}{\partial t}\| f(\cdot,t)\|_{L^2(I)}^2 \le  \| f(\cdot,t)\|_{L^2(I)}^2\left[(1-2\sigma^2) - \dfrac{2\sigma^2}{\mu^4C_N}\| f(\cdot,t)\|_{L^2(I)}^4\right], 
\]
from which we conclude. 
\end{proof}

As a result, the solution to the linear consensus model belongs to $L^2(I)$ for any initial, conserved, mass $\mu>0$.  Recently, consensus formation dynamics have been studied under the lens of mass-related clustering effect. To this end, inspired by the Kaniadakis-Quarati equation \cite{KQ,T12}, in the recent work  in \cite{CDTZ} it has been considered the superlinear case $J(f) = f(1+\beta H^\alpha(w)f^\alpha)$, $\beta>0, \alpha>0$. Under this assumption, the resulting consensus model \eqref{eq:generalFP} is still mass preserving and admits an equilibrium distribution, solution to the following differential equation
\begin{equation}
\label{eq:equation_finfty}
(w-m)f^\infty(w)(1+\beta (H(w)f^\infty(w)))^\alpha+ \sigma^2 \dfrac{\partial}{\partial w}(H(w)f^\infty(w)) = 0
\end{equation}
which is provided, for any $C>0$, by 
\[
f^\infty_C(w) =C \dfrac{(1+w)^{a}(1-w)^b}{(1-\beta \left(C(1+w)^{ a}(1-w)^{ b}\right)^\alpha)^{1/\alpha}},\qquad w\in I, 
\] 
with $a,b$ as in \eqref{eq:ab} and
\[
C \le \bar C = \left(\dfrac{1}{\beta}\right)^{1/\alpha}(1+m)^{-a-1}(1-m)^{-b-1}. 
\]
It is interesting to observe that, in correspondence to the value $\bar C$, the steady state $f^\infty_{\bar C}(w)$ blows up around $w=m$ with order $2/\alpha$ as $w \to m$. Indeed, we have 
\[
\lim_{w \to m}\dfrac{1-\beta \left(\bar C(1+w)^{ a}(1-w)^{ b}\right)^\alpha}{(w-m)^2} = \dfrac{\alpha}{2\sigma^2(1-m^2)}. 
\]
As a result, a finite critical mass exists and is given by 
\[
\mu = \int_{I} f^\infty_{\bar C}(w) dw<+\infty, 
\]
for any $\alpha>2$. Hence, at variance with the linear case the initial mass of the system plays an important role in the propagation of regularity for this class of problems. \\

We now extend the previous analysis to diffusion weights of the form
\[
H(w) = (1-w^2)^{\gamma}, \qquad \gamma>1,
\]
in order to assess the impact of diffusion localization on the structure of stationary states
and on the existence of a critical mass.
As shown in Figure~\ref{fig:H}, increasing $\gamma$ yields a progressively more localized diffusion profile.

\begin{figure}
	\centering	\includegraphics[width=0.7\textwidth]{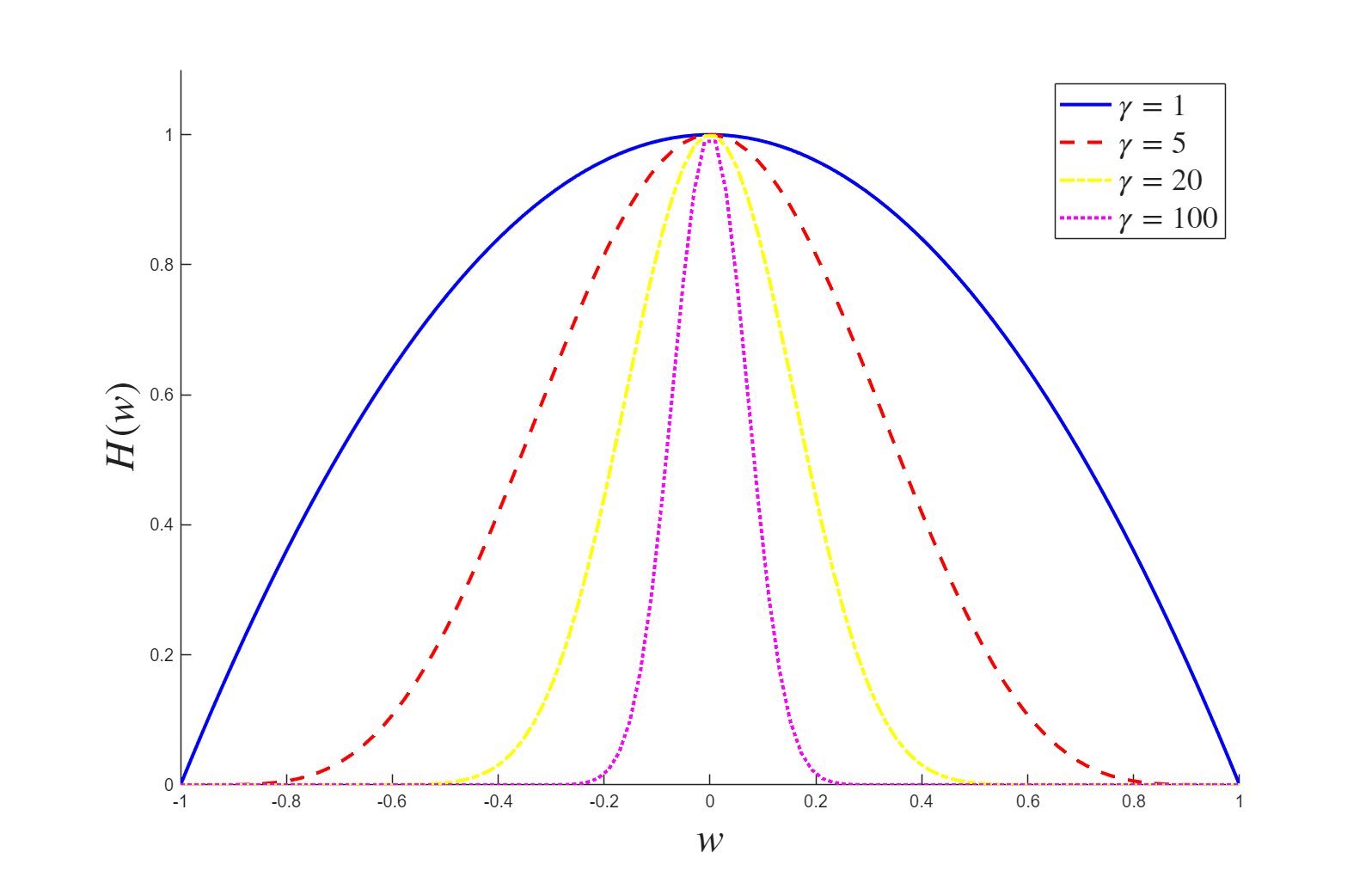}
	\caption{Shape of $H(w) = (1-w^2)^{\gamma}$ for different values of $\gamma$.}
	\label{fig:H}
\end{figure}

In this setting, the stationary solution of the superlinear consensus model \eqref{eq:generalFP}
can be formally computed from \eqref{eq:equation_finfty} and is given by 
\[
f^{\infty}_{C}(w)
=
\frac{1}{H(w)}
\frac{Ce^{-\frac{1}{\sigma^2}V(w)}}
{\left(1-\beta C^{\alpha}e^{-\frac{\alpha}{\sigma^2}V(w)}\right)^{1/\alpha}},
\]
where $V(\cdot) $ is such that 
$$
\dfrac{\partial}{\partial w}V(w) = \dfrac{w-m}{(1-w^2)^{\gamma}}, 
$$
i.e.
\[
V(w) = \frac{1}{2(\gamma-1)} \frac{1}{(1-w^2)^{\gamma-1}} -m  \mathcal{I}_\gamma(w),
\]
where
\[
\mathcal{I}_\gamma(w) = \int \frac{1}{(1-w^2)^\gamma} \, dw.
\]
For integer values of $\gamma\in \mathbb N,\gamma \ge 1$, the function $\mathcal{I}_\gamma(w)$ can be computed recursively as
\begin{align*}
&\mathcal{I}_1(w) = -\frac{1}{2}\log\left(\frac{1-w}{1+w}\right),\\
&\mathcal{I}_{\gamma+1}(w) = \frac{1}{2\gamma}\left((2\gamma-1) \mathcal{I}_\gamma(w) + \frac{w}{(1-w^2)^\gamma}\right), \quad \gamma \in \mathbb{N}, \gamma\geq1.
\end{align*}
In the symmetric case $m=0$ the explicit steady state simplifies to
\begin{equation}
\label{eq:asymptotic_m0}
f^{\infty}_{C}(w)
=
\frac{1}{(1-w^2)^{\gamma}}
\frac{C\, \exp\left\{-\frac{1}{2\sigma^2(\gamma-1)}\frac{1}{(1-w^2)^{\gamma-1}}\right\}}
{\left(1-\beta C^{\alpha}
\exp\left\{-\frac{\alpha}{2\sigma^2(\gamma-1)}\frac{1}{(1-w^2)^{\gamma-1}}\right\}\right)^{1/\alpha}},
\qquad w\in I,
\end{equation}
with
\[
C \leq \bar C
=
\beta^{-1/\alpha}
\exp\!\left(\frac{1}{2\sigma^2(\gamma-1)}\right).
\]
In analogy with the case $\gamma=1$, for \(C = \bar C\), the steady state \(f^{\infty}_{\bar C}\) exhibits a singular behavior
at \(w=0\), blowing up with order \(2/\alpha\):
\[
\lim_{w \rightarrow 0}
\frac{1-\beta \bar{C}^{\alpha}
\exp\!\left(-\frac{\alpha}{2\sigma^2(\gamma-1)}
\frac{1}{(1-w^2)^{\gamma-1}}\right)}{w^2}
=
\frac{\alpha}{2\sigma^2(\gamma-1)}.
\]
Consequently, a finite critical mass
\[
\mu = \int_{I} f^{\infty}_{\bar C}(w)\,dw < +\infty
\]
exists for any \(\alpha>2\).
Finally, for \(m\neq 0\), using the recursive primitive \(\mathcal{I}_\gamma\), the steady state displays a singularity around $w=m$ with analogous qualitative behaviour. 

In Figure~\ref{Fig:f_inf} we illustrate how diffusion localisation and mass criticality affect the stationary solutions in the subcritical regime $C<\bar C$ (left) and  in the critical regime $C=\bar C$ (right). 
We considered the symmetric case $m = 0$ and the stationary state exhibits concentration around $w=0$. We considered $H(w) = (1-w^2)^\gamma$, $w \in [-1,1]$, and several integer values of the constant $\gamma \in \{1,10,50,100\}$. In all cases we fixed $\alpha = 3$, $\beta = 1$ and a diffusion coefficient $\sigma^2 = 0.025$.

\begin{figure}[!htbp]
	\centering
	\begin{subfigure}[b]{0.48\textwidth}
		\centering
		\includegraphics[width=\textwidth]{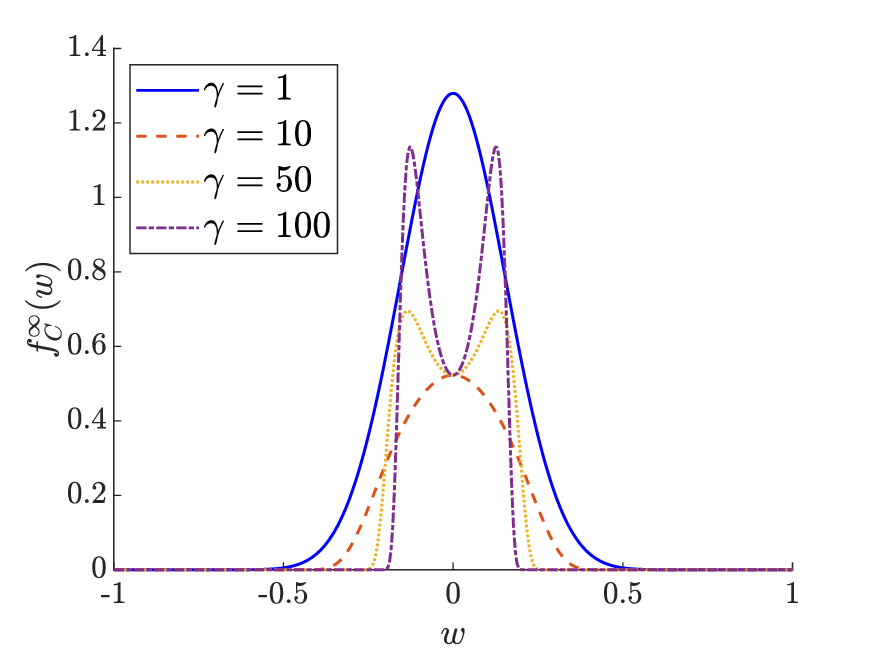}
		\label{fig:finf}
	\end{subfigure}
	\hfill
	\begin{subfigure}[b]{0.48\textwidth}
		\centering
		\includegraphics[width=\textwidth]{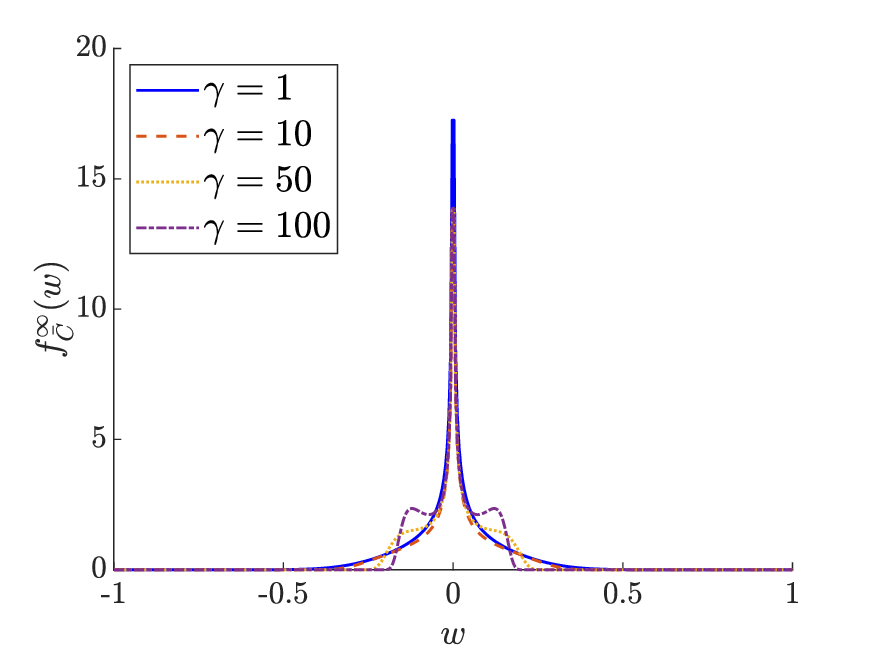}
		\label{fig:finf_critica}
	\end{subfigure}
	\caption{Asymptotic distribution of \eqref{eq:generalFP} with drift $J(f) = f(1+\beta H^\alpha(w)f^\alpha)$ as in \eqref{eq:asymptotic_m0}. We fixed $\alpha=3$, $\beta=1$, $\sigma^2=0.025$, $m=0$ and different values of $\gamma\in \{1,10,50,100\}$. Left: subcritical case $C<\bar{C}$. Right: critical case $C=\bar{C}$.}
	\label{Fig:f_inf}
\end{figure}

In the following we will focus on superlinear Fokker-Planck equations to understand the interplay between the initial mass and the propagation of regularity for this class of problems. 
\section{Loss of regularity and supercritical mass}\label{loss}
We remark that, in the subcritical regime $\alpha<2$, the superlinear Fokker-Planck equation \eqref{eq:generalFP} with $J(f) = f(1+\beta H^\alpha f^\alpha)$, $m=0$ and $\gamma = 1$, is such that the norm $\| f\|_{L^2(I)}^2$ is bounded for all $t>0$ as shown in \cite{TZ} under suitable assumptions. On the other hand in the supercritical regime $\alpha>2$, we get that if the initial mass is sufficiently large
 the solution to the problem loses  $L^2$-regularity in finite time. In this section, we extend the previous result to the case $\gamma>1$. Following the strategy introduced in \cite{TZ}, we show that for $\alpha>2$
solutions to the superlinear Fokker--Planck equation \eqref{eq:generalFP} with
$J(f)=f(1+\beta H^\alpha f^\alpha)$ and $H(w)=(1-w^2)^\gamma$, $\gamma\ge1$ cannot have uniformly bounded $L^2$-norm whenever the initial mass exceeds a suitable critical threshold. 

Specifically, we consider solutions $f$ to
\begin{equation}\label{eq:superlinearFP}
\partial_t f(w,t) =
\partial_w \Bigl[w f(w,t)\bigl(1+\beta (H(w)f(w,t))^\alpha\bigr)
+ \sigma^2 \partial_w\bigl(H(w)f(w,t)\bigr)\Bigr],
\end{equation}
for $w\in I$, with initial datum $f_0\in L^1(I)\cap L^2(I)$. The total mass is conserved,
\[
\mu=\int_I f_0(w)\,dw=\int_I f(w,t)\,dw \qquad \text{for all } t>0.
\]
We first recall a useful estimate. Let $\phi \in L^2(I)$ be a nonnegative function and let $0 < R < 1$. Then
\begin{equation*}
\begin{split}
  \int_{I} \phi(w)\,dw
  &= \int_{|w|\leq R} \phi(w)\,dw + \int_{|w|>R} \phi(w)\,dw \\
  &\leq (2R)^{1/2}\left( \int_{I}\phi^2(w)\,dw\right)^{1/2}
  + \frac{1}{R^2}\int_{I} w^2 \phi(w)\,dw.
\end{split}
\end{equation*}
Optimizing the right-hand side with respect to $R>0$ yields
\begin{equation}\label{eq:L1_L2_energy}
\int_I \phi(w)\,dw
\le 5 \left(\frac{1}{2\sqrt{2}}\right)^{4/5}
\left(\int_I \phi^2(w)\,dw\right)^{\frac{2}{5}}
\left(\int_I w^2\phi(w)\,dw\right)^{\frac{1}{5}}.
\end{equation}
Estimate~\eqref{eq:L1_L2_energy} yields a bound on the $L^1$-norm of a nonnegative function
$\phi \in L^2(I)$ in terms of its $L^2$-norm and its second-order moment.

Based on the following observation, we define the energy in $I$ as the normalised second-order moment
\[
E(t)=\frac{1}{\mu}\int_{I} w^2 f(w,t)\,dw.
\]
Applying \eqref{eq:L1_L2_energy} to $f$, we obtain the lower bound
\begin{equation}\label{eq:energy_L2_ineq}
E(t)\geq \left(\frac{1}{5}\right)^5
\frac{(2\sqrt{2})^4\mu^4}{\|f\|_{L^2(I)}^2}.
\end{equation}
In particular, if $\|f\|_{L^2(I)}$ remains uniformly bounded then $E(t)$ is bounded away from zero.
This relation will be the key ingredient to detect loss of $L^2$-regularity.\\
We write equation \eqref{eq:superlinearFP} in weak form:
\begin{equation*}
       \frac{d}{dt}\int_{I} \varphi(w) f(w,t)\,dw
       = -\int_{I}\varphi'(w)wf\bigl(1+\beta H^{\alpha}f^{\alpha}\bigr)\,dw
       +\sigma^2 \int_{I} \varphi''(w)\,Hf\,dw .
\end{equation*}
By choosing $\varphi(w)=w^2$, we obtain
\begin{equation}\label{eq:weakSLFP}
\begin{split}
  \frac{d}{dt} E(t)
  &=-\frac{2}{\mu}\int_{I} w^2 f\,dw
    -\frac{2\beta}{\mu}\int_Iw^2 H^{\alpha} f^{\alpha+1}\,dw
    +\frac{2\sigma^2}{\mu} \int_{I} Hf\,dw  \\
  &= -2 E(t)
     - \frac{2\beta}{\mu}\int_{I}w^2 H^{\alpha} f^{\alpha+1}\,dw 
     + \frac{2\sigma^2}{\mu} \int_{I} Hf\,dw .
\end{split}
\end{equation}
To estimate the diffusion term, observe that

\begin{equation}\label{eq:ineq_diffusion}
\begin{split}
  \frac{1}{\mu}\int_{I} Hf\,dw
  &\leq \frac{1}{\mu}\int_{I} (1-w^2) f\,dw = 1-E(t)
\end{split}
\end{equation}
Inserting \eqref{eq:ineq_diffusion} into \eqref{eq:weakSLFP}, we obtain
\begin{equation}\label{eq:evol_energy}
 \frac{d}{dt} E(t) \leq
 2\sigma^2-2(1+\sigma^2)E(t)-\frac{2\beta}{\mu} \int_{I}w^2H^{\alpha}f^{\alpha+1}\,dw.
\end{equation}
We estimate the nonlinear term in \eqref{eq:evol_energy} in terms of the energy through an extension of Lemma 3.1 in \cite{TZ}.

\begin{lemma}
Let $\alpha>2$, $\gamma>1$, then the following bound holds
\[
 \int_{I}w^2H^{\alpha}f^{\alpha+1}\,dw
 \geq
 \mu^{\alpha+1}
 \frac{\left(1-E(t)\right)^{\gamma\,\frac{3\alpha}{2}}}
 {(c_{\alpha}d_{\alpha}+d_{\alpha}^{-2})^{\frac{3\alpha}{2}}
  E(t)^{\frac{\alpha-2}{2}}},
\]
where
\[
c_{\alpha}=\left(\frac{2\alpha}{\alpha-2}\right)^{\frac{\alpha}{\alpha+1}},
\qquad
d_{\alpha}=\left(\frac{2(\alpha+1)}{c_{\alpha}(\alpha-2)}\right)^{1/3}.
\]
\end{lemma}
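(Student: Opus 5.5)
The plan is to follow Lemma 3.1 in \cite{TZ}. We bound a weighted $L^1$ quantity from above by a product of the nonlinear integral and the energy, using H\"older's inequality on a central region $|w|\le R$ and the second moment outside it. The weight $(1-w^2)^{\gamma}$ is then handled by Jensen's inequality.

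\textbf{Step 1: reduce to the weighted mass.} Set $g=Hf$ and $N=\int_I w^2 g^{\alpha+1}f^{-1}\cdots$; it is cleaner to work with the weighted mass $M=\int_I H f\,dw$. Since $H=(1-w^2)^\gamma$ and $\gamma\ge1$, Jensen's inequality applied to the probability measure $f\,dw/\mu$ gives
\[
\frac{M}{\mu}\ \ge\ \Bigl(\frac{1}{\mu}\int_I(1-w^2)f\,dw\Bigr)^{\gamma}=(1-E)^{\gamma}.
\]
The goal therefore becomes the inequality
\[
\int_I w^2H^\alpha f^{\alpha+1}\,dw\ \ge\ \frac{M^{\frac{3\alpha}{2}}\,\mu^{\alpha+1-\frac{3\alpha}{2}}}{(c_\alpha d_\alpha+d_\alpha^{-2})^{\frac{3\alpha}{2}}E^{\frac{\alpha-2}{2}}}.
\]
Combined with $M\ge \mu(1-E)^\gamma$, this yields exactly the claimed bound. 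The exponent $\gamma\frac{3\alpha}{2}$ in the statement confirms that this is the intended structure.

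\textbf{Step 2: split the weighted mass.} For $0<R<1$ write
\[
M=\int_{|w|\le R}Hf\,dw+\int_{|w|>R}Hf\,dw.
\]
On the outer region, use $H\le1$ and $1\le w^2/R^2$, so that $\int_{|w|>R}Hf\,dw\le \mu E/R^2$. On the inner region, write $Hf=(w^2H^\alpha f^{\alpha+1})^{\frac{1}{\alpha+1}}\,|w|^{-\frac{2}{\alpha+1}}H^{\frac{1}{\alpha+1}}$. Apply H\"older's inequality with exponents $\alpha+1$ and $\frac{\alpha+1}{\alpha}$, bounding $H\le1$ in the second factor. Writing $N=\int_I w^2H^\alpha f^{\alpha+1}\,dw$, this gives
\[
\int_{|w|\le R}Hf\,dw\le N^{\frac{1}{\alpha+1}}\Bigl(\int_{|w|\le R}|w|^{-\frac{2}{\alpha}}dw\Bigr)^{\frac{\alpha}{\alpha+1}}= N^{\frac{1}{\alpha+1}}\Bigl(\frac{2\alpha}{\alpha-2}\Bigr)^{\frac{\alpha}{\alpha+1}}R^{\frac{\alpha-2}{\alpha+1}}.
\]
The integral $\int_{|w|\le R}|w|^{-2/\alpha}\,dw$ is finite precisely because $\alpha>2$, and this produces the constant $c_\alpha$.

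\textbf{Step 3: choose $R$.} We now have
\[
M\le c_\alpha N^{\frac{1}{\alpha+1}}R^{\frac{\alpha-2}{\alpha+1}}+\mu E R^{-2}.
\]
Rather than optimizing exactly, choose $R$ to balance the two terms in scaling, namely
\[
R=d_\alpha\Bigl(\frac{\mu E}{N^{1/(\alpha+1)}}\Bigr)^{\frac{\alpha+1}{3\alpha}}.
\]
This choice turns the bound into
\[
M\le (c_\alpha d_\alpha^{\frac{\alpha-2}{\alpha+1}}+d_\alpha^{-2})\,(\mu E)^{\frac{\alpha-2}{3\alpha}}N^{\frac{2}{3\alpha}}.
\]
The value of $d_\alpha$ in the statement is the exact minimizer of $c_\alpha d^{\frac{\alpha-2}{\alpha+1}}+d^{-2}$, up to the exponent convention used in the paper. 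If the exponents do not reproduce $c_\alpha d_\alpha$ literally, I would bound $d_\alpha^{(\alpha-2)/(\alpha+1)}$ by a constant of the stated form, or simply accept the resulting constant. Solving for $N$ gives
\[
N\ge \frac{M^{\frac{3\alpha}{2}}}{(\cdots)^{\frac{3\alpha}{2}}(\mu E)^{\frac{\alpha-2}{2}}}.
\]
Inserting $M\ge\mu(1-E)^\gamma$ produces the power $\mu^{\frac{3\alpha}{2}-\frac{\alpha-2}{2}}=\mu^{\alpha+1}$, which is exactly the power in the statement.

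The main obstacle is the admissibility constraint $R<1$. If the balancing choice gives $R\ge1$, I would instead take $R=1$, so that the outer region is empty. In that case $M\le c_\alpha N^{1/(\alpha+1)}$, and I would check that this gives a bound at least as strong, using $E\le1$ and the relation defining $R$. The remaining work is verifying that the constants match $c_\alpha d_\alpha+d_\alpha^{-2}$ exactly; I expect only routine bookkeeping there, following the corresponding computation in \cite{TZ}.
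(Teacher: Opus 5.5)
Your proposal is correct and is essentially the paper's proof: split at radius $R$, apply H\"older with the integrable weight $|w|^{-2/\alpha}$ on $\{|w|\le R\}$, use the second moment on $\{|w|>R\}$, and balance $R$. Convexity of $x\mapsto x^\gamma$ (the paper's H\"older step with exponents $\gamma,\frac{\gamma}{\gamma-1}$) then gives $\int_I Hf\,dw\ge\mu(1-E)^\gamma$. The only difference is that the paper runs the splitting on the larger quantity $\int_I H^{\frac{\alpha}{\alpha+1}}f\,dw$, whereas you run it on $\int_I Hf\,dw$ and absorb the extra factor $H^{\frac{1}{\alpha+1}}\le 1$ into the H\"older weight.

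Both loose ends you flag close at once. No restriction $R<1$ is needed, since $\int_{I\cap\{|w|\le R\}}|w|^{-2/\alpha}\,dw\le\frac{2\alpha}{\alpha-2}R^{\frac{\alpha-2}{\alpha}}$ for every $R>0$. The stated constant is also valid, because $c_\alpha<\frac{2\alpha}{\alpha-2}<\frac{2(\alpha+1)}{\alpha-2}$ forces $d_\alpha>1$, hence $c_\alpha d_\alpha^{\frac{\alpha-2}{\alpha+1}}+d_\alpha^{-2}\le c_\alpha d_\alpha+d_\alpha^{-2}$. The substitution actually produces the left-hand constant, in the paper's computation as well as in yours.
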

\begin{proof}
Fix $R>0$. We split the integral as
\begin{equation}\label{eq:split_integral}
\begin{split}
 \int_{I} H^{\frac{\alpha}{\alpha+1}} f\,dw
 &= \int_{|w|<R } H^{\frac{\alpha}{\alpha+1}} f\,dw
   + \int_{|w|\ge R} H^{\frac{\alpha}{\alpha+1}} f\,dw \\
 &\le \int_{|w|<R} H^{\frac{\alpha}{\alpha+1}} f\,dw
   + \frac{1}{R^2}\int_{I}w^2 H^{\frac{\alpha}{\alpha+1}} f\,dw .
\end{split}
\end{equation}
For the first term write
\[
H^{\frac{\alpha}{\alpha+1}} f
= H^{\frac{\alpha}{\alpha+1}} |w|^{-\frac{2}{\alpha+1}}
  \bigl(|w|^{\frac{2}{\alpha+1}} f\bigr)
\]
and apply H\"older's inequality with exponents
$p=\alpha+1$ and $q=\frac{\alpha+1}{\alpha}$. This yields
\begin{align*}
 &\int_{|w|<R} H^{\frac{\alpha}{\alpha+1}} f\,dw\le\\
 &\left(\int_{|w|<R} |w|^{-\frac{2}{\alpha}}\,dw\right)^{\frac{\alpha}{\alpha+1}}
 \left(\int_{I}w^2 H^{\alpha} f^{\alpha+1}\,dw\right)^{\frac{1}{\alpha+1}} .
\end{align*}
Since $\alpha>2$, the first integral in the right-hand side is finite and satisfies
\[
\int_{|w|<R} |w|^{-\frac{2}{\alpha}}\,dw
= \frac{2\alpha}{\alpha-2} R^{\frac{\alpha-2}{\alpha}} .
\]
Therefore,
\[
\int_{|w|<R } H^{\frac{\alpha}{\alpha+1}} f\,dw
\le
c_{\alpha}\,
R^{\frac{\alpha-2}{\alpha+1}}
\left(\int_{I}w^2 H^{\alpha} f^{\alpha+1}\,dw\right)^{\frac{1}{\alpha+1}},
\]
where
\[
c_{\alpha}=\left(\frac{2\alpha}{\alpha-2}\right)^{\frac{\alpha}{\alpha+1}}.
\]
Combining this estimate with \eqref{eq:split_integral}, we obtain
\begin{equation}\label{eq:ineq_R_generic}
 \int_{I} H^{\frac{\alpha}{\alpha+1}} f\,dw
 \le
 c_{\alpha} R^{\frac{\alpha-2}{\alpha+1}}
 \left(\int_{I}w^2 H^{\alpha} f^{\alpha+1}\,dw\right)^{\frac{1}{\alpha+1}} +
 \frac{1}{R^2}
 \int_{I}w^2 H^{\frac{\alpha}{\alpha+1}} f\,dw .
\end{equation}
Optimizing the right-hand side with respect to $R>0$ yields
\[
R^*
= d_{\alpha}\,
\frac{\left(\int_{I}w^2 H^{\frac{\alpha}{\alpha+1}} f\,dw\right)^{\frac{\alpha+1}{3\alpha}}}
{\left(\int_{I}w^2 H^{\alpha} f^{\alpha+1}\,dw\right)^{\frac{1}{3\alpha}}},
\qquad
d_{\alpha}=\left(\frac{2(\alpha+1)}{c_{\alpha}(\alpha-2)}\right)^{1/3}.
\]
Substituting $R^*$ into \eqref{eq:ineq_R_generic} we conclude that
\begin{equation}\label{eq:ineq_target}
\begin{split}
 \int_{I} H^{\frac{\alpha}{\alpha+1}} f\,dw
 \le\;&
 (c_{\alpha} d_{\alpha}+d_{\alpha}^{-2}) \\
 &\times
 \left(\int_{I}w^2 H^{\alpha} f^{\alpha+1}\,dw\right)^{\frac{2}{3\alpha}}
 \left(\int_{I}w^2 H^{\frac{\alpha}{\alpha+1}} f\,dw\right)^{\frac{\alpha-2}{3\alpha}} .
\end{split}
\end{equation}
Now, observe that
\begin{equation*}
\int_{I}(1-w^2)f\,dw=\mu -  \int_I w^2 f\,dw = \mu(1-E(t))
\end{equation*}
Then,  we obtain
\begin{equation}\label{eq:Holder}
\begin{split}
\mu^{\gamma}(1-E(t))^{\gamma} 
&= \left( \int_{I} (1-w^2) f(w,t)\,dw \right)^{\gamma} = \left( \int_{I} (1-w^2)\, f^{1/\gamma} f^{(\gamma-1)/\gamma}\,dw \right)^{\gamma} \\
&\le \mu^{\gamma-1}  \int_{I} (1-w^2)^{\gamma} f(w,t)\,dw \le\mu^{\gamma-1} \int_{I} H^{\frac{\alpha}{\alpha+1}} f(w,t)\,dw,
\end{split}
\end{equation}
where the first inequality follows from H\"older's inequality with conjugate exponents $p=\gamma$ and $q=\frac{\gamma}{\gamma-1}$.
Moreover,
\[
\int_{I}w^2 H^{\frac{\alpha}{\alpha+1}} f\,dw
\le \int_{I}w^2 f\,dw
= \mu E(t).
\]
Combining the above estimates with \eqref{eq:ineq_target}, we conclude that
\begin{equation*}
 \int_{I}w^2 H^{\alpha} f^{\alpha+1}\,dw\geq \mu^{\alpha+1}
\frac{\left(1-E(t)\right)^{\gamma\frac{3\alpha}{2}}}
{(c_{\alpha}d_{\alpha}+d_{\alpha}^{-2})^{\frac{3\alpha}{2}} E(t)^{\frac{\alpha-2}{2}}},   
\end{equation*}
which completes the proof.

\end{proof}
This Lemma allows us to show that the evolution of the energy, as given by \eqref{eq:evol_energy}, can reach zero in finite time if the initial mass is sufficiently large. Indeed, incorporating the estimate from Lemma 2 into \eqref{eq:evol_energy} gives
\begin{equation*}
 \frac{d}{dt}E(t) \le 2\sigma^2 - 2(1+\sigma^2)E(t) - \frac{2\beta}{(c_{\alpha}d_{\alpha}+d_{\alpha}^{-2})^{\frac{3\alpha}{2}}} \, \mu^{\alpha} \, 
 \frac{\left(1-E(t)\right)^{\gamma \frac{3\alpha}{2}}}{E(t)^{\frac{\alpha-2}{2}}}.
\end{equation*}
Since $\sigma^2>0$ and $E(t)>0$ for all $t>0$, we can further simplify to the stronger inequality
\begin{equation}\label{eq:evol_energy_new}
\frac{d}{dt}E(t) \le 2\sigma^2 - \frac{2\beta \,\mu^{\alpha}\,\left(1 - E(t) \right)^{\gamma \frac{3\alpha}{2}}}{(c_{\alpha}d_{\alpha}+d_{\alpha}^{-2})^{\frac{3\alpha}{2}} E(t)^{\frac{\alpha-2}{2}}}.
\end{equation}
Given $E(0)$, define
\[
\Psi(\mu) =2\sigma^2 - \frac{2\beta \,\mu^{\alpha}\,\left(1- E(0)\right)^{\gamma \frac{3\alpha}{2}}}{(c_{\alpha}d_{\alpha}+d_{\alpha}^{-2})^{\frac{3\alpha}{2}} E(0)^{\frac{\alpha-2}{2}}}.
\]
Then $\Psi(\mu)$ is negative provided the initial mass satisfies
\[
\mu > \left( \frac{\sigma^2 }{\beta (c_{\alpha}d_{\alpha}+d_{\alpha}^{-2})^{-\frac{3\alpha}{2}}} 
\frac{E(0)^{\frac{\alpha-2}{2}}}{\left(1 - E(0) \right)^{\frac{3\alpha}{2}\gamma}} \right)^{\frac{1}{\alpha}}.
\]
Therefore, as in \cite{TZ}, if $\Psi(\mu)<0$, corresponding to a sufficiently large initial mass, from \eqref{eq:evol_energy_new} we have
\begin{equation*}
 \frac{d}{dt}E(t) \le 2\sigma^2 - \frac{2\beta \,\mu^{\alpha}\,\left(1 - E(0) \right)^{\gamma \frac{3\alpha}{2}}}{(c_{\alpha}d_{\alpha}+d_{\alpha}^{-2})^{\frac{3\alpha}{2}} E(t)^{\frac{\alpha-2}{2}}}
\end{equation*}
This can be rewritten as
\begin{equation}\label{eq:evol_energy_3}
\frac{d}{dt}E(t) \le 2\sigma^2 - \frac{\Lambda}{E(t)^{\eta}}
\le - \frac{\Lambda - 2\sigma^2 E(0)^{\eta}}{E(t)^{\eta}},
\end{equation}
with
\[
\Lambda = \frac{2\beta \, \mu^{\alpha} \left(1-E(0)\right)^{\gamma \frac{3\alpha}{2}}}{(c_{\alpha}d_{\alpha}+d_{\alpha}^{-2})^{\frac{3\alpha}{2}}}, 
\quad
\eta = \frac{\alpha-2}{2} > 0.
\]
From \eqref{eq:evol_energy_3} we deduce
\[
E(t)^{\eta+1} \le E(0)^{\eta+1} - (\eta+1)\left(\Lambda - 2\sigma^2 E(0)^{\eta}\right) t,
\]
so that at time
\[
\bar{t} = \frac{E(0)^{\eta+1}}{(\eta+1)\left(\Lambda - 2\sigma^2 E(0)^{\eta}\right)}
\]
we have $E(\bar{t})=0$. This clearly contradicts inequality \eqref{eq:energy_L2_ineq}, unless the $L^2$-norm of the solution blows up. Notice that $\bar{t}>0$ if the initial energy is sufficiently small and it satisfies
\[
E(0) < \left( \frac{\Lambda}{2\sigma^2} \right)^{1/\eta}.
\]

The condition $E(t)\to 0$ as $t\to\bar t$ implies concentration of the solution at $w=0$. Indeed, by mass conservation and the Cauchy--Schwarz inequality,
\[
\left|\int_I w f(w,t)\,dw\right| \le \mu \sqrt{E(t)},
\]
Consequently, as $t \to \bar{t}$, we get
\[
\int_I \left(w - \int_I w f(w,t)\,dw\right)^2 f(w,t)\,dw \to 0,
\]
and the solution at $t=\bar{t}$ coincides with a Dirac delta concentrated at $w=0$.

\begin{remark}
The above argument can be partially extended to the case $0<\gamma<1$. 
In this regime, a suitable modification of the estimates still yields the existence of a critical mass and finite-time blow-up under analogous assumptions.

The main difference is that estimate \eqref{eq:Holder} is no longer available but can be replaced by the simpler bound
\[
\int_I H^{\frac{\alpha}{\alpha+1}} f\,dw \le \mu(1-E(t)).
\]
As a consequence, the threshold becomes independent of $\gamma$. Recovering its dependence on $\gamma$ would require a more refined analysis.
\end{remark}

Finally, we analyze how the blow-up behavior of the solution to equation \eqref{eq:superlinearFP} depends on the parameter $\gamma\geq1$. The blow-up condition in terms of the initial mass is given by
\[
\mu_\gamma>
\left(
\frac{\sigma^2}{\beta(c_{\alpha}d_{\alpha}+d_{\alpha}^{-2})^{-3\alpha/2}}
\cdot
\frac{E(0)^{(\alpha-2)/2}}{(1-E(0))^{3\alpha\gamma/2}}
\right)^{1/\alpha}.
\]
Since
\[
0<1-E(0)<1,
\]
the denominator decreases exponentially as $\gamma$ increases, and therefore
\[
\lim_{\gamma\to+\infty}\mu_\gamma=+\infty.
\]
This shows that the mass threshold for blow-up is an increasing function of $\gamma$. Consequently, larger values of $\gamma$ require a larger initial mass for the system to develop condensation.
Figure \ref{fig:critical_mass_all} illustrates how  the minimal initial mass required for blow-up varies with the model parameters. The threshold increases with $\gamma$, as illustrated in Figure \ref{fig:mass_gamma_sigma} and \ref{fig:mass_gamma_alpha}, and with the initial energy $E(0)$, as seen in Figure \ref{fig:mass_energy}. In contrast, it decreases as the superlinearity exponent $\alpha$ grows, so that a higher degree of superlinearity requires less mass to induce condensation, as shown by Figures \ref{fig:mass_alpha} and \ref{fig:mass_gamma_alpha}.
\begin{figure}[!htbp]
	\centering
	\begin{subfigure}[b]{0.48\textwidth}
		\centering
		\includegraphics[width=\textwidth]{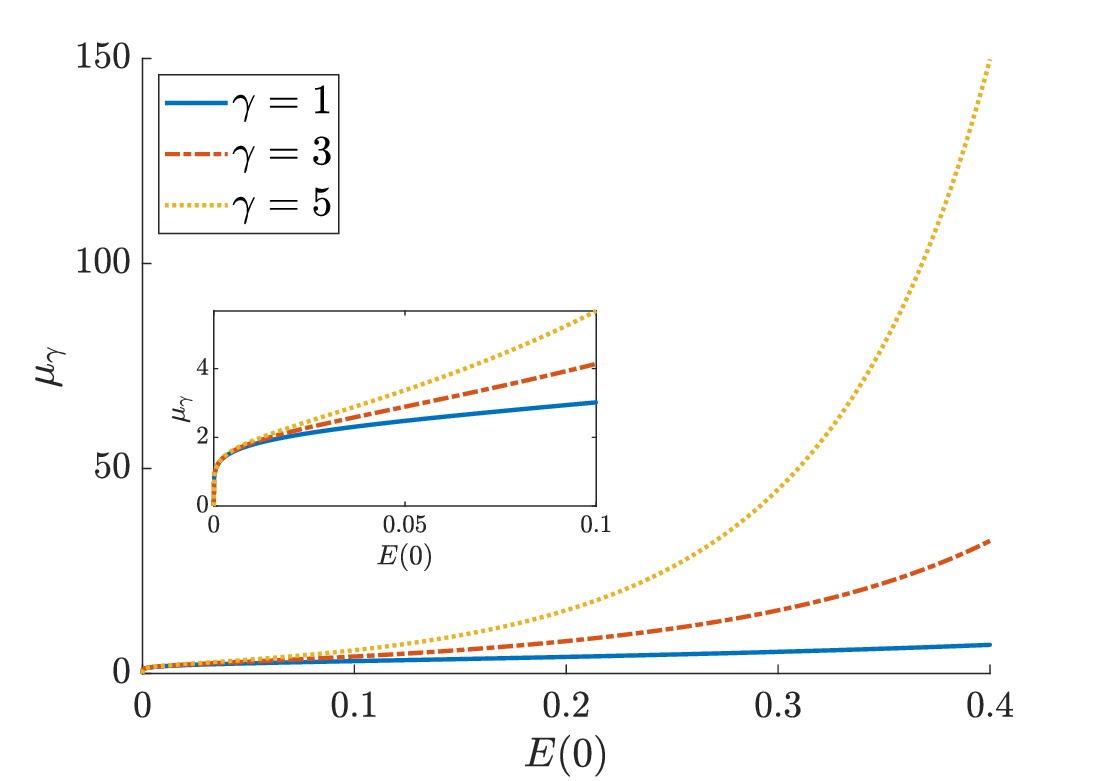}
		\caption{Minimal initial mass as a function of $E(0)$ for different values of $\gamma$ and with fixed $\sigma^2=0.025$, $\beta=1$ and $\alpha=3$.}
		\label{fig:mass_energy}
	\end{subfigure}
	\hfill
	\begin{subfigure}[b]{0.48\textwidth}
		\centering
		\includegraphics[width=\textwidth]{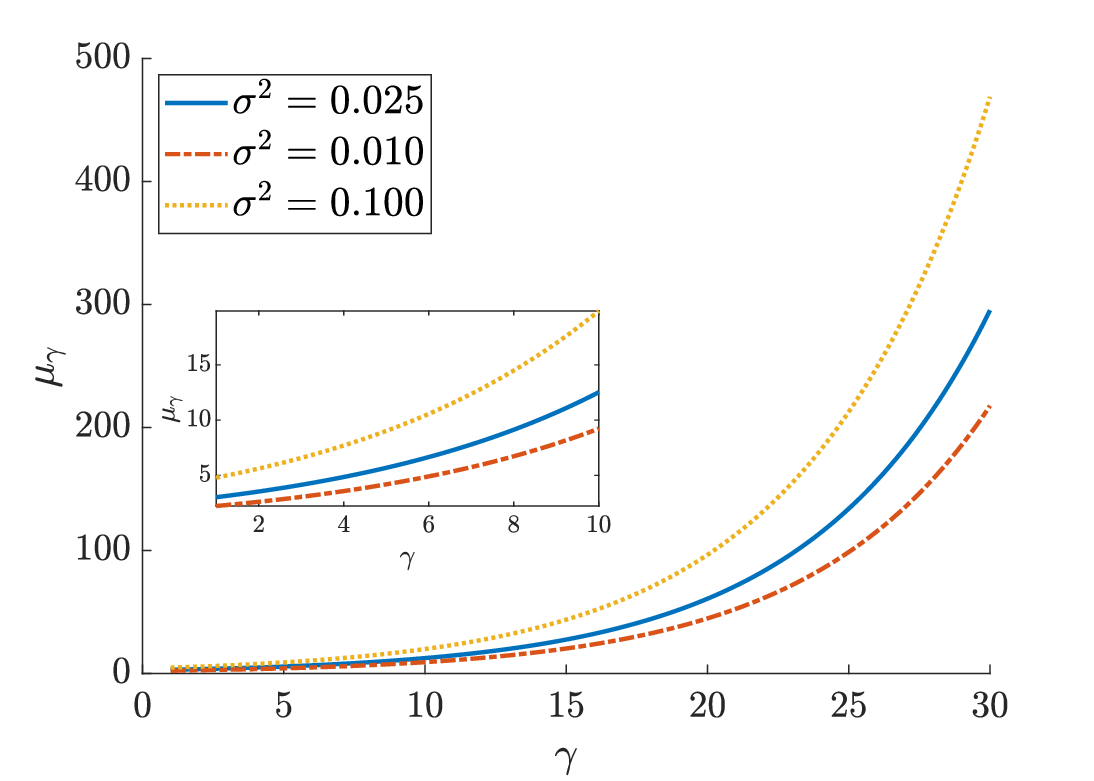}
		\caption{Minimal initial mass as a function of $\gamma$ for different values of $\sigma^2$ and fixed $\alpha=3$, $\beta=1$ and $E(0)=0.1$.}
		\label{fig:mass_gamma_sigma}
	\end{subfigure}
	
	\vspace{0.3cm}
	
	\begin{subfigure}[b]{0.48\textwidth}
		\centering
		\includegraphics[width=\textwidth]{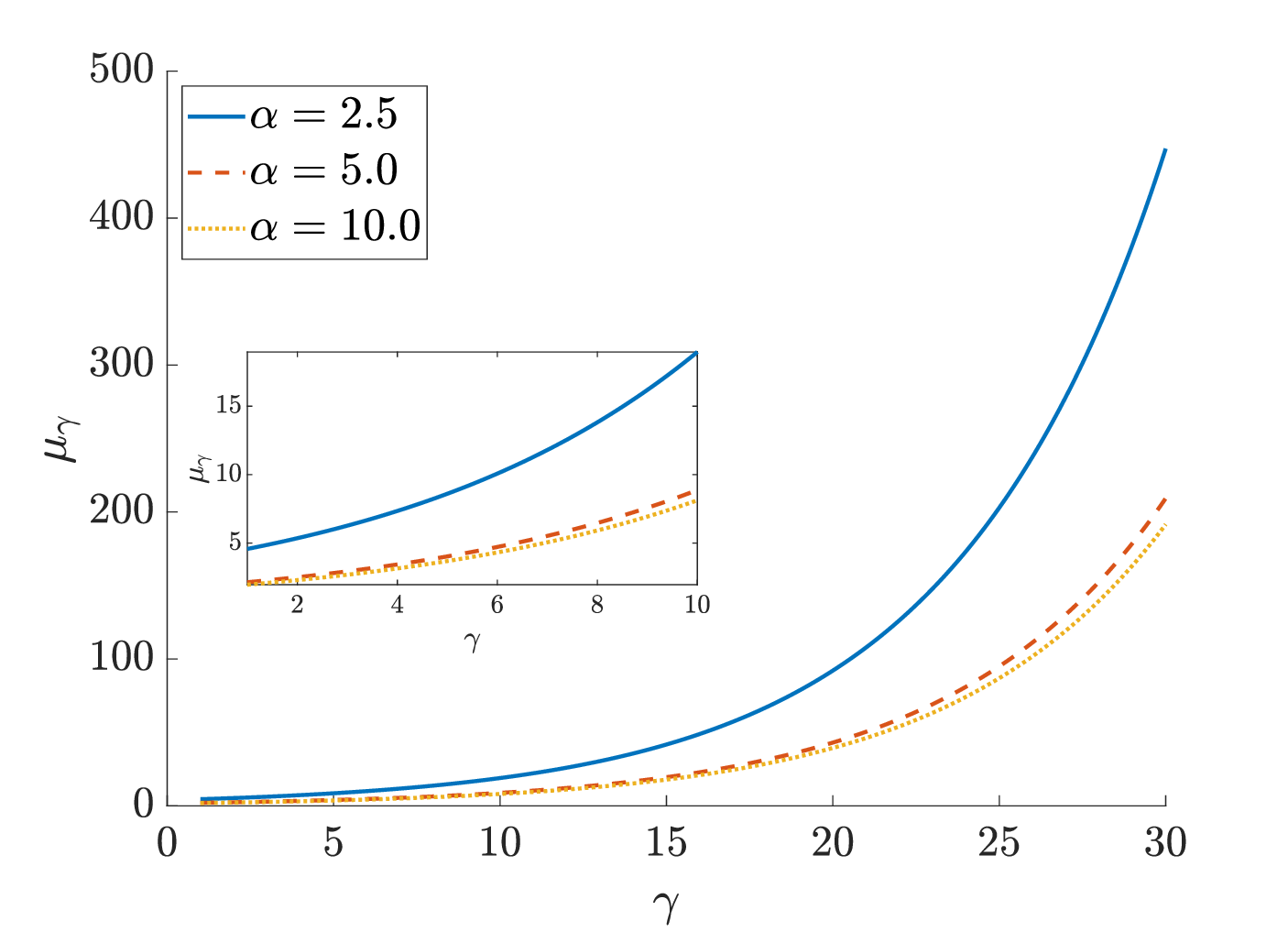}
		\caption{Minimal initial mass as a function of $\gamma$ for different values of $\alpha$ and with fixed $\sigma^2=0.025$, $\beta=1$ and $E(0)=0.1$.}
		\label{fig:mass_gamma_alpha}
	\end{subfigure}
	\hfill
	\begin{subfigure}[b]{0.48\textwidth}
		\centering
		\includegraphics[width=\textwidth]{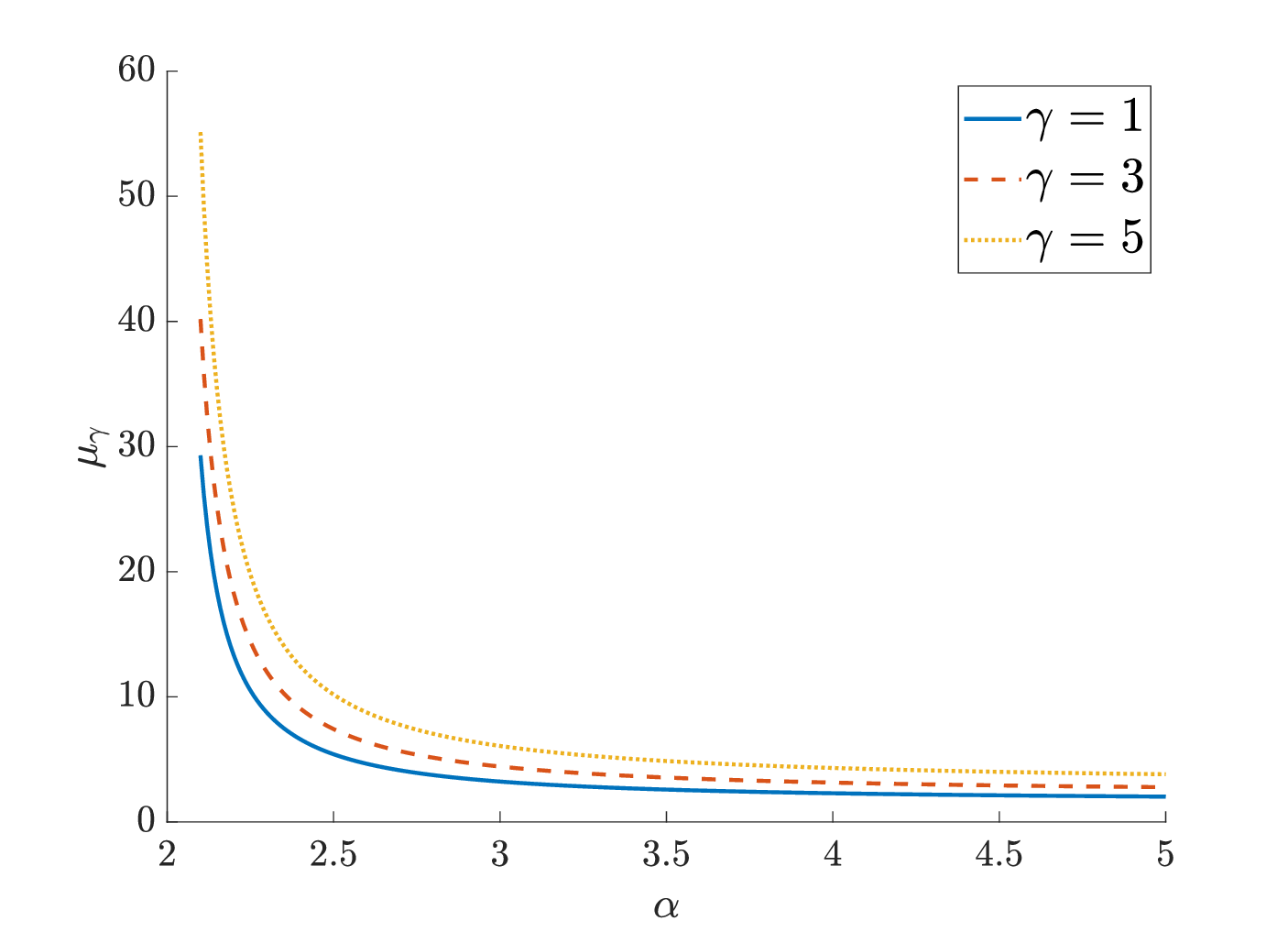}
		\caption{Minimal initial mass as a function of $\alpha$ for different values of $\gamma$ and fixed $\sigma^2=0.025$, $\beta=1$ and $E(0)=0.1$.}
		\label{fig:mass_alpha}
	\end{subfigure}
	
	\caption{Critical initial mass required for finite-time condensation in superlinear Fokker--Planck dynamics, showing its dependence on initial energy $E(0)$, nonlinearity exponent $\alpha$, diffusion shape parameter $\gamma$ and diffusion coefficient $\sigma^2$.}
	\label{fig:critical_mass_all}
\end{figure}

\section*{Conclusion}
In this work we investigated superlinear Fokker--Planck equations arising in consensus formation models with condensation effects, focusing on the role of the total mass for a wide class of diffusion coefficients. Inspired by kinetic descriptions of Bose--Einstein statistics, we considered nonlinear drift terms leading to aggregation and diffusion coefficients vanishing at the boundary of the opinion domain. We first analysed the structure of stationary solutions for a class of polynomial diffusion weights. We showed that, similarly to the classical case, stationary states may exhibit singular behaviour when a critical value of the normalisation constant is considered. In this regime, a finite critical mass exists whenever the superlinearity exponent satisfies $\alpha>2$, and the steady state develops a condensation around the mean opinion. We further demonstrated that this phenomenon persists for increasingly localised diffusion profiles, although the critical mass required to trigger condensation grows with the localisation parameter.

We then addressed the dynamical counterpart of this behaviour by studying the propagation of $L^2$-regularity for the time-dependent problem. By extending energy-based techniques previously developed for the standard diffusion case, we proved that solutions cannot remain uniformly bounded in $L^2$ whenever the initial mass exceeds a suitable threshold. This result establishes a finite-time loss of regularity driven by supercritical mass and provides a link between kinetic mass concentration and the formation of singular states. Our analysis highlights how nonconstant diffusion weights mitigate aggregation effects by increasing the mass threshold for condensation. Several questions remain open, including the investigation of trends to equilibrium and possible multidimensional extensions.

\section*{Acknowledgements}
The results obtain in the present paper have been partially presented in WASCOM 2025, University of Parma, Italy. The research underlying this paper has been undertaken within the activities of the GNFM group of INdAM (National Institute of High Mathematics). M.Z. acknowledges partial support from the PRIN2022PNRR project No.P2022Z7ZAJ, European Union-NextGenerationEU. M.Z. acknowledges partial support by ICSC - Centro Nazionale di Ricerca in High Performance Computing, Big Data and Quantum Computing, funded by European Union-NextGenerationEU. M.C. acknowledges the support by Fondo Italiano per la Scienza (FIS2023-01334) advanced grant "ADvanced numerical Approaches for MUltiscale Systems with uncertainties" - ADAMUS.

\section*{Declarations}
The authors declare no competing interests.

\end{document}